\definecolor{uuuuuu}{rgb}{0.26666666666666666,0.26666666666666666,0.26666666666666666}
\definecolor{xdxdff}{rgb}{0.49019607843137253,0.49019607843137253,1.}
\definecolor{ffqqqq}{rgb}{1.,0.,0.}
\definecolor{ffqqqq}{rgb}{1.,0.,0.}
\definecolor{ffxfqq}{rgb}{1.,0.4980392156862745,0.}
\definecolor{uuuuuu}{rgb}{0.26666666666666666,0.26666666666666666,0.26666666666666666}
\definecolor{qqwuqq}{rgb}{0.,0.39215686274509803,0.}
\definecolor{zzttqq}{rgb}{0.6,0.2,0.}
\definecolor{xdxdff}{rgb}{0.49019607843137253,0.49019607843137253,1.}
\definecolor{qqqqff}{rgb}{0.,0.,1.}
\definecolor{cqcqcq}{rgb}{0.7529411764705882,0.7529411764705882,0.7529411764705882}
\definecolor{sqsqsq}{rgb}{0.12549019607843137,0.12549019607843137,0.12549019607843137}
\theoremstyle{plain}
\newtheorem{theorem}[subsection]{Theorem}
\newtheorem{defi}[subsection]{Definition}
\newtheorem{prop}[subsection]{Proposition}
\newtheorem{prop1}[subsubsection]{Proposition}
\theoremstyle{definition}
\newtheorem{exam}[subsection]{Example}
\newtheorem{remark}[subsection]{Remark}
\newtheorem{remark1}[subsubsection]{Remark}
\newtheorem{conj}[subsection]{Conjecture}
\newcommand{\ci}{\subseteq}
\newcommand{\sci}{\subset}
\newcommand{\set}[1]{\{#1\}}
\newcommand{\ga}{\alpha}
\newcommand{\gb}{\beta}
\newcommand{\gd}{\delta}
\newcommand{\tbf}{\textbf}
\newcommand{\tit}{\textit}
\newcommand{\D}[1]{\mathbb{#1}}
\newcommand{\te}{\text}
\begin{document}

\title{Quantization for a set of discrete distributions on the set of natural numbers}

\address{School of Mathematical and Statistical Sciences\\
University of Texas Rio Grande Valley\\
1201 West University Drive\\
Edinburg, TX 78539-2999, USA.}

\email{\{$^1$juan.gomez15, $^2$haily.martinez01, $^3$mrinal.roychowdhury\}@utrgv.edu}
\email{\{$^4$alexis.salazar01, $^5$daniel.vallez01\}@utrgv.edu}

\author{$^1$Juan Gomez}
 \author{$^2$Haily Martinez}
\author{$^3$Mrinal K. Roychowdhury}
\author{$^4$Alexis Salazar}
\author{$^5$Daniel J. Vallez}

%
%
%
%
%
%

%

\subjclass[2010]{60Exx, 94A34.}
\keywords{Discrete distribution, optimal sets, quantization error}

\date{}
\maketitle

\pagestyle{myheadings}\markboth{J. Gomez, H. Martinez, M. Roychowhury, A. Salazar, D. Vallez}{Quantization for a set of discrete distributions on the set of natural numbers}
\begin{abstract}
 The quantization scheme in probability theory deals with finding a best approximation of a given probability distribution by a probability distribution that is supported on finitely many points. In this paper, first we state and prove a theorem, and then give a conjecture. We verify the conjecture by a few examples. Assuming that the conjecture is true, for a set of discrete distributions on the set of natural numbers we have calculated the optimal sets of $n$-means and the $n$th quantization errors for all positive integers $n$. In addition, the quantization dimension is also calculated.
\end{abstract}

\section{Introduction}
The most common form of quantization is rounding-off. Its purpose is to reduce the cardinality of the representation space, in particular, when the input data is real-valued. It has broad applications in communications, information theory, signal processing and data compression (see \cite{GG, GL1, GL2, GN, P, Z1, Z2}).  Let $\D R^d$ denote the $d$-dimensional Euclidean space equipped with the Euclidean norm $\|\cdot\|$, and let $P$ be a Borel probability measure on $\D R^d$. Then, the $n$th \textit{quantization
error} for $P$, with respect to the squared Euclidean distance, is defined by
\begin{equation*} \label{eq1} V_n:=V_n(P)=\inf \Big\{V(P; \ga) : \ga \subset \mathbb R^d, 1\leq  \text{ card}(\ga) \leq n \Big\},\end{equation*}
where $V(P; \ga)= \int \mathop{\min}\limits_{a\in\alpha} \|x-a\|^2 dP(x)$ represents the distortion error due to the set $\ga$ with respect to the probability distribution $P$, and for a set $A$, $\te{card}(A)$ represents the cardinality of the set $A$.
A set $\ga$ for which the infimum occurs and contains no more than $n$ points is called an \tit{optimal set of $n$-means}, and is denoted by $\ga_n:=\ga_n(P)$. The elements of an optimal set are also called as \tit{optimal quantizers}.
It is known that for a Borel probability measure $P$ if its support contains infinitely many elements and $\int \| x\|^2 dP(x)$ is finite, then an optimal set of $n$-means always exists and has exactly $n$-elements \cite{AW, GKL, GL1, GL2}.
The number
\begin{equation} \label{eq0000} D (P):=\lim_{n\to \infty}  \frac{2\log n}{-\log V_{n}(P)},
\end{equation}
if it exists, is called the \tit{quantization dimension of $P$}. Quantization dimension measures the speed at which the specified measure of the error goes to zero as $n$ tends to infinity.
For a finite set $\ga \sci \D R^d$ and $a\in \ga$, by $M(a|\ga)$ we denote the set of all elements in $\D R^d$ which are the nearest to $a$ among all the elements in $\ga$, i.e.,
$M(a|\ga)=\set{x \in \D R^d : \|x-a\|=\min_{b \in \ga}\|x-b\|}.$
$M(a|\ga)$ is called the \tit{Voronoi region} generated by $a\in \ga$.
On the other hand, the set $\set{M(a|\ga) : a \in \ga}$ is called the \tit{Voronoi diagram} or \tit{Voronoi tessellation} of $\D R^d$ with respect to the set $\ga$.
The following proposition provides further information on the Voronoi regions generated by an optimal set of $n$-means (see \cite{GG, GL2}).
\begin{prop} \label{prop10}
Let $\alpha$ be an optimal set of $n$-means, $a \in \alpha$, and $M (a|\ga)$ be the Voronoi region generated by $a\in \ga$, i.e.,
\[M(a|\ga)=\{x \in \mathbb R^d : \|x-a\|=\min_{b \in \alpha} \|x-b\|\}.\]
Then, for every $a \in\alpha$,
\begin{itemize}
\item[(i)] $P(M(a|\ga))>0$,
\item[(ii)] $ P(\partial M(a|\ga))=0$,
\item[(iii)] $a=E(X : X \in M(a|\ga))$, and
\item[(iv)] $P$-almost surely the set $\set{M(a|\ga) : a \in \ga}$ forms a Voronoi partition of $\D R^d$.
\end{itemize}
\end{prop}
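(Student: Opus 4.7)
The plan is to prove the four parts in the order (i), (iii), (ii), (iv), since (iii) relies on (i), and (iv) follows from (ii). Throughout I would compare $\alpha$ with a modified $n$-point set $\alpha'$, using the optimality bound $V(P;\alpha)\leq V(P;\alpha')$ against an upper bound for $V(P;\alpha')$ obtained by evaluating the distortion with the Voronoi assignment inherited from $\alpha$.

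For (i), I argue by contradiction. If $P(M(a|\alpha))=0$, then $\min_{b\in\alpha}\|x-b\|^2 = \min_{b\in\alpha\setminus\{a\}}\|x-b\|^2$ for $P$-almost every $x$, whence $V_{n-1}\leq V(P;\alpha\setminus\{a\}) = V(P;\alpha) = V_n$. Combined with the trivial $V_n\leq V_{n-1}$, this gives $V_{n-1}=V_n$; but then $\alpha\setminus\{a\}$ realizes the infimum defining $V_n$ with only $n-1$ points, contradicting the cited fact that an optimal set of $n$-means (which exists under the standing hypotheses) must have exactly $n$ elements.

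For (iii), set $c:=E(X\mid X\in M(a|\alpha))$, well-defined by (i). The standard centroid identity
\[
\int_{M(a|\alpha)}\|x-a\|^2\,dP(x)=\int_{M(a|\alpha)}\|x-c\|^2\,dP(x)+P(M(a|\alpha))\,\|a-c\|^2
\]
yields, with $\alpha':=(\alpha\setminus\{a\})\cup\{c\}$, the estimate $V(P;\alpha')\leq V(P;\alpha)-P(M(a|\alpha))\,\|a-c\|^2$, since for each $x$ one may bound $\min_{y\in\alpha'}\|x-y\|^2$ above by $\|x-c\|^2$ on $M(a|\alpha)$ and by $\|x-b\|^2$ on $M(b|\alpha)$ for $b\neq a$. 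Optimality together with $P(M(a|\alpha))>0$ then forces $a=c$. (Should $c$ coincide with some existing element of $\alpha$, the contradiction still arises via $V_{n-1}\leq V(P;\alpha')<V_n\leq V_{n-1}$.)

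For (ii), observe $\partial M(a|\alpha)\subseteq\bigcup_{b\in\alpha\setminus\{a\}} H_{a,b}$, where $H_{a,b}$ is the perpendicular bisector hyperplane of $a$ and $b$. The main obstacle is ruling out $P(H_{a,b})>0$: the strategy is to translate $a$ infinitesimally along the normal to $H_{a,b}$ and show the first-order gain from reassigning the boundary mass to the closer prototype strictly dominates the second-order loss coming from displacing $a$ inside its own (already centroidal) cell, contradicting optimality. This perturbation argument must simultaneously control the effects at all neighboring bisectors $H_{a,b'}$, and this bookkeeping is the most delicate step; alternatively one may invoke the standard formulation in \cite{GG, GL2}. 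Part (iv) is then immediate: the cells $M(a|\alpha)$ cover $\D R^d$ by definition, distinct cells meet only along their boundaries, and those boundaries carry no $P$-mass by (ii), so $\{M(a|\alpha):a\in\alpha\}$ is a partition of $\D R^d$ modulo a $P$-null set.
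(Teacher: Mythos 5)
The paper itself offers no proof of this proposition --- it is stated as a known result with the citation \cite{GG, GL2} --- so the comparison is really with the standard arguments in those references. Against that benchmark, your treatments of (i) and (iii) are correct and are essentially the classical ones: deleting a generator whose region is $P$-null leaves the distortion unchanged, forcing $V_{n-1}=V_n$ and contradicting the exact-cardinality statement the paper quotes; and the parallel-axis (centroid) identity combined with an upper bound on $V(P;\alpha')$ obtained from the assignment inherited from $\alpha$ forces $a=E(X : X\in M(a|\alpha))$. Part (iv) is indeed immediate once (ii) is known.

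The genuine gap is (ii): you do not prove it, you sketch a perturbation and then defer to the literature, and the sketch as stated is off in one respect and overcautious in another. The gain does not come from ``reassigning the boundary mass to the closer prototype'': if you move $a$ and leave $b$ fixed, the tie set $T=M(a|\alpha)\cap M(b|\alpha)$ (which has $P(T)>0$ for some $b\neq a$ whenever the boundary has positive mass) keeps exactly its old distance to $b$, so its contribution is unchanged to first order. The gain comes from the cell itself: since $a$ is the centroid of $M(a|\alpha)$, one has $\int_{M(a|\alpha)\setminus T}(x-a)\,dP=-\int_T(x-a)\,dP$, and every $x\in T$ satisfies $\langle x-a,u\rangle=\|a-b\|/2$ with $u=(b-a)/\|a-b\|$; hence translating $a$ to $a-\epsilon u$ and bounding the new distortion by the explicit assignment ($T$ to $b$, the rest of the old cell to the displaced $a$, all other cells unchanged --- the same device you already used in (iii), which is why no bookkeeping over other bisectors is needed) gives $V_n\le V_n-\epsilon\|a-b\|P(T)+\epsilon^2 P\bigl(M(a|\alpha)\setminus T\bigr)$, a contradiction for small $\epsilon>0$. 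Alternatively, (ii) follows from your own (iii)-argument with no perturbation at all: the centroid condition holds for any measurable Voronoi partition, so applying it with $T$ assigned to $a$'s cell and with $T$ assigned to $b$'s cell yields $a=E(X : X\in T)$ (directly $a=E(X : X\in M(a|\alpha))=E(X : X\in T)$ if the rest of the cell is $P$-null), which places $a$ on the bisecting hyperplane of $a$ and $b$, impossible for $a\neq b$; this is essentially the proof in \cite{GL2}. As written, then, (ii) --- and with it (iv) --- is not established, and the missing step is precisely the one you outsource to the references.
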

From the above proposition, we can say that if $\ga$ is an optimal set of $n$-means for $P$, then each $a\in \ga$ is the conditional expectation of the random variable $X$ given that $X$ takes values on the Voronoi region of $a$. Sometimes, we also refer to such an $a\in \ga$ as the centroid of its own Voronoi region.
In this regard, interested readers can see \cite{DFG, DR, R1}.

A vector $(p_1, p_2, p_3, \cdots)$ is called a probability distribution if $0<p_j<1$ for all $j\in \D N$ and $j\geq 0$ such that $\sum_{j\geq 1}p_j=1$. Notice that $(p_1, p_2, p_3, \cdots)$ can be a finite, or an infinite vector, where by a finite vector it is meant that the number of coordinates in the vector is a finite number, otherwise it is called an infinite vector.

For a Borel probability measure $P$ on the set $\D R$ of real numbers let $U$ be the largest open subset of $\D R$ such that $P(U)=0$, then $\D R\setminus U$ is called the \tit{support} of the probability measure $P$. For example, when an unbiased die is thrown one time, then $P$ is a Borel probability measure on the real line with
\[\te{support}(P)=\set{1, 2, 3, 4, 5, 6},\]
and the associated probability distribution $(\frac 16, \frac 16, \frac 16, \frac 16, \frac 16, \frac 16)$.
When an unbiased coin is tossed two times, then $P$ is a Borel probability measure on $\D R^2$ with
\[\te{support}(P)=\set{(1,1), (1, 2), (2,1), (2,2)},\]
and the associated probability distribution $(\frac 14, \frac 14, \frac 14, \frac 14)$,
where $1$ stands for `Head' and $2$ stands for `Tail'.

\begin{defi} \label{defi1}
Let $(p_1, p_2, \cdots, p_{k-1})$ be a permutation of the set $\set{\frac 12, \frac 1{2^2}, \cdots, \frac 1{2^{k-1}}}$, where $k\in \D N:=\set{1, 2, \cdots}$ with $k\geq 2$. Define a probability measure $P$ on the set $\D R$ of real numbers with the support the set $\D N$ of natural numbers as follows:
\[P:=\sum_{j=1}^{k-1} p_j\gd_j+\sum_{j=k}^\infty \frac 1{2^j} \gd_j,\]
where for $x\in \D R$ the function $\gd_x$ represents the dirac measure, i.e., for any subset $A\ci \D R$, we have $\gd_x(A)=1$ if $x\in A$, and zero otherwise.
\end{defi}

Let us now state the following theorem and the conjecture.
\begin{theorem}\label{main}
Let $P:=\sum_{j=1}^{k-1} p_j\gd_j+\sum_{j=k}^\infty\frac 1{2^j}\gd_j$ be the probability measure as defined by Definition~\ref{defi1}. Let $\set{a_1, a_2, \cdots, a_{n-3}, a_{n-2}, a_{n-1}, a_{n}}$ be an optimal set of $n$-means with $n\geq k+2$. Suppose that $a_1=1, a_2=2, \cdots, a_{n-3}=n-3$. Then, either $a_{n-2}=n-2, a_{n-1}=Av[n-1, n], a_{n}=Av[n+1, \infty)$, or $a_{n-2}=Av[n-2, n-1], a_{n-1}=Av[n, n+1], a_{n}=Av[n+2, \infty)$ with quantization error $V_{n}=\frac{2^{3-n}}{3}$,
where for any $k, \ell\in \D N$, $Av[k, \ell]$ and $Av[k, \infty)$ are defined in the next section.
\end{theorem}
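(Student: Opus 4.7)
The plan is to reduce Theorem~\ref{main} to a three-cell Voronoi analysis on the tail $\set{n-2, n-1, n, \ldots}$, where $n - 2 \geq k$ ensures every weight there is simply $1/2^j$. By Proposition~\ref{prop10}(iii) each $a_i$ equals the conditional expectation of $X$ on its Voronoi cell $V_i$, and since the quantizers lie in increasing order on $\D R$ each $V_i$ is an interval. For $i \leq n-4$ the cell $V_i$ lies between the midpoints $i \pm \tfrac12$, so $V_i \cap \D N = \set{i}$. For $V_{n-3}$, disjointness of Voronoi cells confines any additional natural numbers to the right ($\geq n-2$), and their strictly positive weights would push the conditional expectation strictly above $n-3$, contradicting $a_{n-3} = n-3$. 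Hence $V_1, \ldots, V_{n-3}$ are all singletons, and $V_{n-2}, V_{n-1}, V_n$ partition $\set{n-2, n-1, n, \ldots}$ into three consecutive blocks $A_1 = \set{n-2, \ldots, s_1}$, $A_2 = \set{s_1+1, \ldots, s_2}$, $A_3 = \set{s_2+1, \ldots}$ for integers $n-2 \leq s_1 < s_2$.

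Next I would set up the error building blocks. From $\sum_{j=m}^\infty j/2^j = (m+1)/2^{m-1}$ one gets $Av[m,\infty) = m+1$ and an infinite-tail error $A(m) := \sum_{j=m}^\infty (j-(m+1))^2/2^j = 4/2^m$. For a finite block set $B(a, b) := \sum_{j=a}^b (j - Av[a,b])^2/2^j$; direct computation yields $B(a,a+1) = \frac{1}{3\cdot 2^a}$ and $B(a,a+2) = \frac{13}{7\cdot 2^{a+1}}$. The total distortion is then $V_n = B(n-2, s_1) + B(s_1+1, s_2) + A(s_2+1)$. The central technical lemma is strict monotonicity of $B(a, \cdot)$: with $\Sigma, \mu$ the total mass and first moment on $\set{a,\ldots,b}$ and $w = 1/2^{b+1}$, a short algebraic manipulation starting from $B(a,b) = \sum_{j=a}^b j^2/2^j - \mu^2/\Sigma$ yields
\[
B(a, b+1) - B(a, b) = \frac{w\bigl[(b+1)\Sigma - \mu\bigr]^2}{\Sigma(\Sigma + w)} > 0.
\]

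Armed with this identity, I would rule out $s_1 \geq n$: any such choice forces $B(n-2, s_1) \geq B(n-2, n) = \frac{26}{7\cdot 2^n}$, which already exceeds the target $\frac{2^{3-n}}{3} = \frac{8}{3\cdot 2^n}$ (since $78 > 56$), so the sum is too large. Thus $s_1 \in \set{n-2, n-1}$, and only a residual two-cell minimization on $\set{s_1+1, s_1+2, \ldots}$ remains; applying the same monotonicity to $B(s_1+1, s_2)$ together with the decay $A(s_2+1) = 4/2^{s_2+1}$ reduces that to a bounded arithmetic check which pins the unique minimizer to $s_2 = s_1 + 2$. The two resulting configurations are Option~1 (from $s_1 = n-2,\ s_2 = n$) and Option~2 (from $s_1 = n-1,\ s_2 = n+1$); in both cases summing the three block errors telescopes to $\frac{8}{3\cdot 2^n} = \frac{2^{3-n}}{3}$. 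The main obstacle is the monotonicity lemma: once the displayed identity is established, the rest is a finite case check.
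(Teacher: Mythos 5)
Your proposal is correct and follows essentially the same route as the paper: fix the first $n-3$ cells as singletons, bound $V_n$ above by the candidate configuration of distortion $\tfrac{2^{3-n}}{3}$, and then do a finite case check on how the last three Voronoi cells split the tail $\set{n-2,n-1,n,\dots}$, with your block errors $B(a,a+1)=\tfrac{1}{3\cdot 2^a}$, $B(a,a+2)=\tfrac{13}{7\cdot 2^{a+1}}$, $A(m)=2^{2-m}$ matching the paper's $Er[\cdot,\cdot]$ values exactly. The only difference is cosmetic: your variance-increment identity $B(a,b+1)-B(a,b)=\frac{w[(b+1)\Sigma-\mu]^2}{\Sigma(\Sigma+w)}$ systematizes the truncation of the infinite case list, where the paper instead bounds oversized cells directly by the error of their first few points (e.g.\ $Er[n-1,n+2]=\tfrac{97}{15}2^{-n-1}$).
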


\begin{exam}
Let $(\frac 1{2^3}, \frac 1{2^2}, \frac 12)$ be a permutation of the set $\set{\frac 12, \frac 1{2^2}, \frac 1{2^3}}$. Write
\[P:=\frac 1{2^3}\gd_1+\frac 1{2^2} \gd_2+\frac 1{2} \gd_3+\sum_{j=4}^\infty \frac 1{2^j} \gd_j.\]
Then, $P$ is a Borel probability measure on $\D R$ with support the set $\D N$ of natural numbers. Let us assume that $\set{a_1, a_2, \cdots, a_n}$ is an optimal set of $n$-means for $n=6$. If $a_1=1$, $a_2=2$, and $a_3=3$, then by Theorem~\ref{main}, we must have the set $\set{a_4, a_5, a_6}$ equals either the set $\set{4, Av[5,6], Av[7, \infty)}$, or the set $\set{Av[4,5], Av[6,7], Av[8, \infty)}$ with quantization error $V_6=\frac 1{24}$.
\end{exam}

\begin{conj} \label{conj1}
Let $P:=\sum_{j=1}^{k-1} p_j\gd_j+\sum_{j=k}^\infty \frac 1{2^j}\gd_j$ be the probability measure as defined by Definition~\ref{defi1}. Let $\set{a_1, a_2, a_3, \cdots, a_n}$ be an optimal set of $n$-means with $n\geq k+2$ such that $a_1<a_2<\cdots<a_n$. Then, $a_1=1, a_2=2, \cdots, a_{n-3}=n-3$.
\end{conj}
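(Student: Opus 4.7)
The plan is to prove the conjecture by contradiction. Suppose $\set{a_1<a_2<\cdots<a_n}$ is an optimal set of $n$-means with $n\geq k+2$ violating the conjecture; the goal is to show $V(\ga)>V_n=\tfrac{2^{3-n}}{3}$. By Proposition~\ref{prop10}, each $a_i$ is the centroid of its Voronoi region $M(a_i|\ga)$, an interval in $\D R$. Under the ordering $a_1<\cdots<a_n$, the blocks $S_i:=M(a_i|\ga)\cap\D N$ form a partition of $\D N$ into non-empty contiguous sets and $V(\ga)=\sum_{i=1}^{n}P(S_i)\,\mathrm{Var}_{P\mid S_i}(X)$. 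Set $M:=\max S_{n-3}$; since each $S_i$ is non-empty, $M\geq n-3$, with equality iff $S_i=\set{i}$ for every $i\leq n-3$. Hence it suffices to rule out $M\geq n-2$. Assuming $M\geq n-2$, split $V(\ga)=F+T$ into the contributions of the first $n-3$ blocks (covering $\set{1,\ldots,M}$) and the last three blocks (covering $\set{M+1,M+2,\ldots}$).

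For the tail, $M\geq n-2\geq k$ implies the restriction of $P$ to $\set{M+1,M+2,\ldots}$ is a translate of a scalar multiple of the basic geometric distribution $P_0:=\sum_{j\geq 1}2^{-j}\gd_j$. Shift-invariance of squared-error distortion combined with the $n=3$, $k=1$ instance of Theorem~\ref{main} (whose hypothesis ``$a_1=1,\ldots,a_{n-3}=n-3$'' is vacuous when $n-3=0$, so the statement is unconditional) gives $T\geq\tfrac{1}{3\cdot 2^M}>0$; the same computation applied to the standard configuration (singletons at $\set{1,\ldots,n-3}$ together with one of the optimal three-block tails) produces a quantizer with total distortion $\tfrac{1}{3\cdot 2^{n-3}}$, so $V_n\leq\tfrac{1}{3\cdot 2^{n-3}}$. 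The conjecture therefore follows from the key first-part inequality
\[F\;\geq\;\tfrac{1}{3\cdot 2^{n-3}},\]
because then $V(\ga)=F+T>\tfrac{1}{3\cdot 2^{n-3}}\geq V_n$, contradicting optimality.

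To establish this key inequality, observe that the first $n-3$ block-sizes sum to $M\geq n-2$, so by pigeon-hole some $|S_i|\geq 2$; let $i^*$ be the smallest such index. By minimality, $S_j=\set{j}$ for $j<i^*$, so $S_{i^*}$ begins at position $i^*$ and contains $\set{i^*,i^*+1}$. A short calculation shows the centroid-variance cost of a block is monotone non-decreasing under the addition of an extra point (adding a point $x$ with old centroid $c$ and new centroid $c'$ changes the cost by $(c-c')^2P(S)+(x-c')^2P(x)\geq 0$), so the cost of $S_{i^*}$ is at least the merge cost $\tfrac{P(i^*)P(i^*+1)}{P(i^*)+P(i^*+1)}$ of the pair $\set{i^*,i^*+1}$. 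A case analysis on $i^*$ versus $k$ finishes the argument: (a) if $i^*\geq k$, both probabilities equal $2^{-j}$ and the merge cost is $\tfrac{1}{3\cdot 2^{i^*}}\geq\tfrac{1}{3\cdot 2^{n-3}}$; (b) if $i^*+1\leq k-1$, the heavy-region probabilities satisfy $p_{i^*},p_{i^*+1}\geq 2^{-(k-1)}$, yielding merge cost $\geq 2^{-k}\geq\tfrac{1}{3\cdot 2^{n-3}}$ via $n-3\geq k-1$; (c) if $i^*+1=k$, the merge function $p\mapsto\tfrac{p\cdot 2^{-k}}{p+2^{-k}}$ is increasing in $p$, so minimizing over $p_{i^*}\geq 2^{-(k-1)}$ gives merge cost $\geq\tfrac{1}{3\cdot 2^{k-1}}\geq\tfrac{1}{3\cdot 2^{n-3}}$. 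The main obstacle is the boundary case (c), where the merge cost depends on the unknown permuted value $p_{i^*}$; the monotonicity of the merge function resolves it uniformly in the permutation, and all remaining computations reduce to elementary geometric-series manipulations.
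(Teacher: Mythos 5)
Your proposal is correct, and it actually goes further than the paper does: the paper leaves Conjecture~\ref{conj1} open, verifying it only for two particular permutations (Propositions~\ref{prop2.1} and~\ref{prop2.2}) by an induction in which the Voronoi region of $a_{\ell+1}$ is shown to be a singleton by comparing the explicit merge error $Er[\ell+1,\ell+2]$ with the benchmark $\tfrac{2^{3-n}}{3}$. Your argument runs on the same engine --- the competitor $\set{1,\dots,n-2,Av[n-1,n],Av[n+1,\infty)}$ gives $V_n\le\tfrac{2^{3-n}}{3}$, and an adjacent two-point merge inside the first $n-3$ Voronoi blocks already costs at least that much --- but you make it permutation-free by three moves the paper does not have: the ``first non-singleton block'' pigeonhole among $S_1,\dots,S_{n-3}$ (replacing the induction), the uniform lower bound $p_j\ge 2^{-(k-1)}$ for $j\le k-1$, and the monotonicity of $(p,q)\mapsto \tfrac{pq}{p+q}$ used in the three-way case split $i^*\ge k$, $i^*\le k-2$, $i^*=k-1$; together with $n\ge k+2$ these give $F\ge\tfrac{2^{3-n}}{3}$ in every case, with strictness supplied by $T>0$. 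Reassuringly, this is exactly where the hypothesis is needed: in the paper's Proposition~\ref{prop51} (where $p_2=\tfrac1{200}$) the merge cost of $\set{2,3}$ drops below the benchmark, so your bound correctly fails there. Two small repairs before this can stand as a proof: first, at the outset you may only assert $V_n\le\tfrac{2^{3-n}}{3}$, not equality (equality is what is being proved); second, your tail bound $T\ge\tfrac1{3\cdot 2^{M}}$ is justified by an ``$n=3$, $k=1$ instance of Theorem~\ref{main}'', which lies outside that theorem's hypotheses ($k\ge 2$ and $n\ge k+2$), so either prove the three-mean statement for $\sum_{j\ge1}2^{-j}\gd_j$ separately or simply drop the claim --- all your final inequality uses is $T>0$, which is immediate since the last Voronoi block contains infinitely many atoms of positive mass. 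With those cosmetic fixes, your argument settles the general conjecture, which is strictly more than the paper's verification of two examples.
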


In this paper, first we give a complete proof of Theorem~\ref{main}. Then, we verify the conjecture by two discrete distributions as mentioned in Remark~\ref{rem1} and Remark~\ref{rem2}. Under the assumption that the conjecture is true, we calculate the optimal sets of $n$-means and the $n$th quantization errors for the two discrete distributions for all $n\in \D N$.
Once the quantization error is known, the quantization dimension can easily be calculated, see Proposition~\ref{prop3333}. In addition, in the last section, we give a proposition Proposition~\ref{prop51}. By this proposition, we deduce that if  $(p_1, p_2, \cdots, p_{k-1})$ is not a permutation of the set $\set{\frac 12, \frac 1{2^2}, \cdots, \frac 1{2^{k-1}}}$, where $k\in \D N:=\set{1, 2, \cdots}$ with $k\geq 2$, then the conjecture is not true. The general proof of the conjecture is not known yet. Such a problem still remains open.

\section{Preliminaries}
Let $\D N:=\set{1, 2, 3, \cdots}$ be the set of natural numbers. Let $(p_1, p_2, p_3,\cdots)$, where $0<p_j<1$ for all $j\in \D N$ and $\sum_{j=1}^\infty p_j=1$, be a probability distribution. Let  \[P=\sum_{j=1}^\infty p_j \gd_j, \]
where $\gd_j$ is the dirac measure as given in Definition~\ref{defi1}. Then, $P$ is a discrete probability measure on the set $\D R$ of real numbers with the support the set of natural numbers $\D N$ associated with the probability distribution $(p_1, p_2, p_3,\cdots)$.
In fact, if $X$ is a random variable associated with the probability measure $P$, and $f$ is the probability mass function, then we have
 \[P(X=j)=f(j)=p_j.\]
Define the following notations:
For $k, \ell\in \D N$, where $k\leq \ell$, write
\[[k, \ell]:=\set{n : n \in \D N \te{ and } k\leq n\leq \ell}, \te{ and } [k, \infty):=\set{n : n\in \D N \te{ and } n\geq k}.\]
Further, write
\[Av[k, \ell]: =E\Big (X : X \in [k, \ell]\Big)=\frac{\sum _{n=k}^{\ell} p_n n}{\sum_{n=k}^{\ell}p_n}, \  Av[k, \infty): =E\Big (X : X \in [k, \infty)\Big)=\frac{\sum _{n=k}^{\infty} p_n n}{\sum_{n=k}^{\infty}p_n},\]
\[Er[k, \ell]:=\sum _{n=k}^{\ell} p_n \Big(n-Av[k, \ell]\Big)^2, \te{ and } Er[k, \infty):=\sum _{n=k}^{\infty} p_n \Big(n-Av[k,\infty)\Big)^2.\]
 Notice that
$E(X):=E(X : X \in \te{supp}(P)) =\sum _{n=1}^\infty p_n n$, and so the optimal set of one-mean is the set $\set{\sum _{n=1}^\infty p_n n}$ with quantization error
\[V(P)=\sum _{n=1}^{\infty} p_n (n-E(X))^2.\]
In the following sections, we give the main results of the paper.

\section{Proof of Theorem~\ref{main} and verifications of Conjecture~\ref{conj1}} \label{sec2}
In this section, in the following subsections first we prove Theorem~\ref{main}, and then by two different examples, we verify that Conjecture~\ref{conj1} is true. Then, we state and prove Proposition~\ref{prop3333}, which gives the quantization dimension of the probability measure $P$.

\subsection{\tbf{Proof of Theorem~\ref{main}}}
Let $n\geq k+2$, where $k\geq 2$.
The distortion error due to the set $\gb:=\set{1, 2, \cdots, n-3, n-2, Av[n-1, n], Av[n+1, \infty)}$ is given by
\[V(P; \gb)=Er[n-1, n]+Er[n+1, \infty)=\frac{2^{3-n}}{3}.\]
Since $V_{n}$ is the quantization error for $n$-means, we have $V_n\leq \frac{2^{3-n}}{3}$.
Let $\ga:=\set{a_1, a_2, a_3, \cdots, a_{n}}$ be an optimal set of $n$-means, where $1\leq a_1<a_2<a_3<\cdots<a_n<\infty$. Assume that $a_1=1, a_2=2, \cdots, a_{n-3}=n-3$. Then, the Voronoi region of $a_{n-2}$ must contain the element $n-2$. Suppose that the Voronoi region of $a_{n-2}$ contains the set $\set{n-2, n-1, n}$.
Then,
\[V_{n}\geq Er[n-2, n]=\frac{13}{7}  2^{1-n}>\frac{2^{3-n}}{3}\geq V_{n},\]
which is a contradiction. Hence, we can assume that the Voronoi region of $a_{n-2}$ contains only the set $\set{n-2}$ or the set $\set{n-2, n-1}$.
Let us consider the following two cases:

\tit{Case~1. The Voronoi region of $a_{n-2}$ contains only the set $\set{n-2}$.}

 Then, the Voronoi region of $a_{n-1}$ must contain the element $n-1$. Suppose that the Voronoi region of $a_{n-1}$ contains the set $\set{n-1, n, n+1, n+2}$. Then,
\[V_n\geq \frac{97}{15}\ 2^{-n-1}>V_n,\]
which is a contradiction. Assume that the Voronoi region of $a_{n-1}$ contains only the set $\set{n-1, n, n+1}$. Then, as the Voronoi region of $a_{n}$ contains the set $\set{k : k\geq n+2}$,
\[V_n\geq Er[n-1, n+1]+Er[n+2, \infty)=\frac 57 2^{2-n}>V_n,\]
which leads to a contradiction. Next, assume that the Voronoi region of $a_{n-1}$ contains only the set $\set{n-1}$. Then, the Voronoi region of $a_{n}$ contains the set $\set{k : k \geq n}$ yielding
\[V_{n}=Er[n, \infty)=2^{2-n}>V_n,\]
which gives a contradiction. This yields the fact that the Voronoi region of $a_{n-1}$ contains only the set $\set{n-1, n}$, and hence, the Voronoi region of $a_{n}$ contains only the set $\set{k : k\geq n+1}$. Thus, in this case we have
$a_{n-2}=n-2, a_{n-1}=Av[n-1, n], a_{n}=Av[n+1, \infty)$ with quantization error $V_n=\frac{2^{3-n}}{3}.$

\tit{Case~2. The Voronoi region of $a_{n-2}$ contains only the set $\set{n-2, n-1}$.}

 Then, the Voronoi region of $a_{n-1}$ must contain the element $n$. Suppose that the Voronoi region of $a_{n-1}$ contains the set $\set{n, n+1, n+2, n+3}$. Then,
\[V_n\geq Er[n-2, n-1]+Er[n, n+3]=\frac{59} 5 2^{-n-2}>\frac{2^{3-n}}{3}\geq V_{n},\]
which leads to a contradiction. Assume that the Voronoi region of $a_{n-1}$ contains only the set $\set{n, n+1, n+2}$. Then, as the Voronoi region of $a_{n}$ contains the set $\set{k : k\geq n+3}$,
\[V_{n}\geq Er[n-2, n-1]+Er[n, n+2]+Er[n+3, \infty)= \frac{29}{21}2^{1-n}>V_{n},\]
which leads to a contradiction. Hence, we can assume that the Voronoi region of $a_{n-1}$ contains only the set $\set{n}$, or the set $\set{n, n+1}$. If the Voronoi region of $a_{n-1}$ contains only the set $\set{n}$, then
\[V_{n}\geq Er[n-2, n-1]+Er[n+1, \infty)=\frac{5} 3 2^{1-n}>V_n,\]
which leads to a contradiction. Hence, we can assume that the Voronoi region of $a_{n-1}$ contains only the set $\set{n, n+1}$, and the Voronoi region of $a_{n}$ contains only the set $\set{k : k\geq n+2}$ yielding
$a_{n-2}=Av[n-2, n-1], a_{n-1}=Av[n, n+1], a_{n}=Av[n+2, \infty)$ with quantization error $V_{n}=\frac{2^{3-n}}{3}$.

Case~1 and Case~2 together give the optimal sets of $n$-means and the $n$th quantization errors for all positive integers $n$. Thus, the proof of the theorem Theorem~\ref{main} is completed. \qed

\subsection{Verification of Conjecture~\ref{conj1} when $(p_1, p_2, p_3, p_4, \cdots)=(\frac 1 {2^2}, \frac 1 2, \frac 1{2^3}, \frac 1{2^4}, \frac 1{2^5}, \cdots)$} \label{subsection111}
In this case the probability mass function $f$ for the probability measure $P$ on the set of real numbers $\D R$ is given by
\[f(j)=\left\{\begin{array} {ll}
\frac 1{2^2} & \te { if } j=1,\\
\frac 12 & \te { if } j=2,\\
\frac 1{2^n} & \te{ if } j =n \te { for } n\in \D N \te{ and } n\neq 1, 2,\\
0 &  \te{ otherwise}.
\end{array}
\right.\]
Notice that here $k=3$ and $(p_1,\cdots, p_{k-1})=(p_1, p_2)=(\frac 1{2^2}, \frac 12)$, where $k\in \D N$ as defined by Definition~\ref{defi1}.

Let us now prove the following proposition.

\begin{prop1} \label{prop2.1}
Let $n\geq 5$, and let $\ga_n$ be an optimal set of $n$-means for the probability measure $P$ given by
\[P=\frac 1{2^2}\gd_1+\frac 12 \gd_2+\sum_{j=3}^\infty \frac 1{2^j} \gd_j.\] Then, $\ga_n$ must contain the set $\set{1, 2, \cdots, (n-3)}$.
\end{prop1}
\begin{proof}
The distortion error due to the set $\gb:=\set{1, 2, \cdots, (n-2), Av[n-1, n], Av[n+1, \infty)}$ is given by
\[V(P; \gb)=Er[n-1, n]+Er[n+1, \infty)=\frac{2^{3-n}}{3}.\]
Since $V_n$ is the quantization error for $n$-means, we have $V_n\leq \frac{2^{3-n}}{3}$.
Let $\ga_n:=\set{a_1, a_2, \cdots, a_n}$ be an optimal set of $n$-means such that $1\leq a_1<a_2<\cdots<a_n<\infty$. We show that $a_1=1, a_2=2, \cdots, a_{n-3}=n-3$. We prove it by induction.
Notice that the Voronoi region of $a_1$ must contain the element $1$. Suppose that the Voronoi region of $a_1$ also contains the element $2$. Then,
\[V_n>\sum_{j=1}^2f(j)(j-Av[1,2])^2=\frac{1}{6}\geq \frac{2^{3-n}}{3} \geq V_n,\]
which is a contradiction. Hence, we can conclude that the Voronoi region of $a_1$ contains only the element $1$ yielding $a_1=1$.
Thus, we can deduce that there exists a positive integer $\ell$, where $1\leq \ell<n-3$, such that $a_1=1, a_2=2, \cdots, a_\ell=\ell$.
We now show that $a_{\ell+1}=\ell+1$. Notice that the Voronoi region of $a_{\ell+1}$ must contain $\ell+1$. Suppose that the Voronoi region of $a_{\ell+1}$ also contains the element $\ell+2$. Then, we have
\[V_n>\sum_{j=\ell+1}^{\ell+2}\frac 1 {2^j}(j-Av[\ell+1, \ell+2])^2=Er[\ell+1, \ell+2]=\frac{2^{-\ell-1}}{3}\geq \frac{2^{3-n}}{3}\geq V_n,\]
which is a contradiction. Hence, we can conclude that the Voronoi region of $a_{\ell+1}$ contains only the element $\ell+1$ yielding $a_{\ell+1}=\ell+1$. Notice that $2\leq \ell+1 \leq n-3$. Thus, by the Principle of Mathematical Induction, we deduce that  $a_1=1, a_2=2, \cdots, a_{n-3}=n-3$. Thus, the proof of the proposition is complete.
\end{proof}

\begin{remark1} \label{rem1}
Proposition~\ref{prop2.1} verifies that the conjecture Conjecture~\ref{conj1} is true.
\end{remark1}

\subsection{Verification of Conjecture~\ref{conj1} when $(p_1, p_2, p_3, p_4, \cdots)=(\frac 1 {2^3}, \frac 1 {2^2},  \frac 1{2}, \frac 1{2^4}, \frac 1{2^5}, \cdots)$} \label{subsection112}
In this case the probability mass function $f$ for the probability measure $P$ on the set of real numbers $\D R$ is given by
\[f(j)=\left\{\begin{array} {ll}
\frac 1{2^3} & \te { if } j=1,\\
\frac 12 & \te { if } j=3,\\
\frac 1{2^n} & \te{ if } j =n \te { for } n\in \D N \te{ and } n\neq 1, 3,\\
0 &  \te{ otherwise}.
\end{array}
\right.\]
Notice that here $k=4$ and $(p_1,\cdots, p_{k-1})=(p_1, p_2, p_3)=(\frac 1{2^3}, \frac 1{2^2}, \frac 12)$, where $k\in \D N$ as defined by Definition~\ref{defi1}.

Let us now prove the following proposition.

\begin{prop1} \label{prop2.2}
Let $n\geq 6$, and let $\ga_n$ be an optimal set of $n$-means for the probability measure $P$ given by
\[P=\frac 1{2^3}\gd_1+\frac 1{2^2} \gd_2+\frac 1{2} \gd_3+\sum_{j=4}^\infty \frac 1{2^j} \gd_j.\] Then, $\ga_n$ must contain the set $\set{1, 2, \cdots, (n-3)}$.
\end{prop1}
\begin{proof}
The distortion error due to the set $\gb:=\set{1, 2, \cdots, (n-2), Av[n-1, n], Av[n+1, \infty)}$ is given by
\[V(P; \gb)=Er[n-1, n]+Er[n+1, \infty)=\frac{2^{3-n}}{3}.\]
Since $V_n$ is the quantization error for $n$-means, we have $V_n\leq \frac{2^{3-n}}{3}$.
Let $\ga_n:=\set{a_1, a_2, \cdots, a_n}$ be an optimal set of $n$-means such that $1\leq a_1<a_2<\cdots<a_n<\infty$. We show that $a_1=1, a_2=2, \cdots, a_{n-3}=n-3$. We prove it by induction.
The Voronoi region of $a_1$ must contain the element $1$. Suppose that the Voronoi region of $a_1$ also contains the element $2$. Notice that the remaining elements of the set of natural numbers are contained in the union of the Voronoi regions of $a_2, a_3, \cdots, a_n$ with positive distortion error yielding
\[V_n>\sum_{j=1}^2f(j)(j-Av[1,2])^2=\frac{1}{12}\geq \frac{2^{3-n}}{3} \geq V_n,\]
which is a contradiction. Hence, we can conclude that the Voronoi region of $a_1$ contains only the element $1$, yielding $a_1=1$.
Thus, we can deduce that there exists a positive integer $\ell$, where $1\leq \ell<n-3$, such that $a_1=1, a_2=2, \cdots, a_\ell=\ell$.
We now show that $a_{\ell+1}=\ell+1$. Notice that the Voronoi region of $a_{\ell+1}$ must contain $\ell+1$. Suppose that the Voronoi region of $a_{\ell+1}$ also contains the element $\ell+2$. Then, proceeding in the similar lines as given in Proposition~\ref{prop2.1}, we can see that a contradiction arises.
Hence, we can conclude that the Voronoi region of $a_{\ell+1}$ contains only the element $\ell+1$ yielding $a_{\ell+1}=\ell+1$. Notice that $2\leq \ell+2\leq n-3$. Thus, by the Principle of Mathematical Induction, we deduce that  $a_1=1, a_2=2, \cdots, a_{n-3}=n-3$. Thus, the proof of the proposition is complete.
\end{proof}

\begin{remark1}\label{rem2}
Proposition~\ref{prop2.2} verifies that the conjecture Conjecture~\ref{conj1} is true.
\end{remark1}

\begin{prop1}\label{prop3333}
Let $P:=\sum_{j=1}^{k-1} p_j\gd_j+\sum_{j=k}^\infty \frac 1{2^j}\gd_j$ be the probability measure as defined by Definition~\ref{defi1}. Assume that Conjecture~\ref{conj1} is true. Then, the quantization dimension $D(P)$ exists and equals zero.
\end{prop1}
\begin{proof}
By Theorem~\ref{main} and under the assumption that Conjecture~\ref{conj1} is true, the $n$th quantization error for any positive integer $n\geq k+2$ for the probability measure $P$, defined by Definition~\ref{defi1}, is obtained as $V_n(P)=\frac{2^{3-n}}{3}$. Hence, using the formula \eqref{eq0000}, we have
$D(P)=0.$
\end{proof}

\section{Optimal quantization for the two probability distributions described in Section~\ref{sec2}}
In this section, in the following two subsections we determine the optimal sets of $n$-means and the $n$th quantization errors for all positive integers $n\geq 2$ for the two probability measures $P$ given in Subsection~\ref{subsection111} and Subsection~\ref{subsection112} under the assumption that Conjecture~\ref{conj1} is true.

\subsection{Optimal quantization for $P$ when $(p_1, p_2, p_3, p_4, \cdots)=(\frac 1 {2^2}, \frac 1 2, \frac 1{2^3}, \frac 1{2^4}, \frac 1{2^5}, \cdots)$} \label{subsection2}
Let us give the results in the following propositions.

\begin{prop1} \label{prop43}
The optimal set of two-means is given by $\set{Av[1,3], Av[4, \infty)}$ with quantization error $V_2=\frac{17}{28}$.
\end{prop1}
\begin{proof} We see that $Av[1,3]=\frac{13}{7}$, and $Av[4, \infty)=5$.
Since $3<\frac 12(\frac{13}{7}+5)=\frac{24}{7}<4$, the distortion error due to the set $\gb:=\set{\frac{13}{7}, 5}$ is given by
\[V(P; \gb)=Er[1,3]+Er[4, \infty)=\frac{17}{28}.\]
Since $V_2$ is the quantization error for two-means, we have $V_2\leq\frac{17}{28}$. Let $\ga:=\set{a_1, a_2}$ be an optimal set of two-means such that $a_1<a_2$. Since the points in an optimal set are the conditional expectations in their own Voronoi regions, we have $1\leq a_1<a_2<\infty$. Notice that the Voronoi region of $a_1$ must contain $1$. Suppose that the Voronoi region of $a_1$ contains the set $\set{1, 2, 3, 4}$. Then,
\[V_2\geq \sum_{j=1}^4f(j) (j-Av[1,4])^2=Er[1,4]=\frac{5}{8}>V_2,\]
which yields a contradiction. Hence, we can assume that the Voronoi region of $a_1$ contains only the set $\set{1}$ or $\set{1,2}$, or the set $\set{1,2,3}$.
Suppose that the Voronoi region of $a_1$ contains only the set $\set{1}$, and so the Voronoi region of $a_2$ contains the set $\set{n : n\geq 2}$. Then, we have
\[V_2=Er[2, \infty)=\frac{7}{6}>V_2,\]
which leads to a contradiction. Hence, we can assume that the Voronoi region of $a_1$ contains only the set $\set{1, 2}$, or the set $\set{1,2,3}$. Suppose that the Voronoi
region of $a_1$ contains only the set $\set{1,2}$. Then, the Voronoi region of $a_2$ contains $\set{3, 4, 5, \cdots}$ yielding
\[V_2=Er[1,2]+Er[3, \infty)=\frac{2}{3}>V_2,\]
which gives a contradiction. Hence, we can conclude that the Voronoi region of $a_1$ contains only the set $\set{1, 2,3}$, and the Voronoi region of $a_2$ contains the set $\set{j : j\geq 4}$ yielding
\[a_1=Av[1, 3]\te{ and } a_2=Av[4, \infty) \te{ with quantization error } V_2=Er[1,3]+Er[4, \infty)=\frac{17}{28}.\]
Thus, the proof of the proposition is complete.
\end{proof}
\begin{prop1} \label{prop111}
The set $\set{Av[1,2], Av[3,4], Av[5, \infty)}$  forms the optimal set of three-means with quantization error $V_3=\frac{1}{3}$.
\end{prop1}
\begin{proof}
The distortion error due to set $\gb:=\set{Av[1,2], Av[3,4], Av[5, \infty)}$ is given by
\[V(P; \gb)=Er[1,2]+Er[3,4]+Er[5, \infty)=\frac 13.\]
Since $V_3$ is the quantization error for three-means, we have $V_3\leq \frac 13$. Let $\ga:=\set{a_1, a_2, a_3}$ be an optimal set of three-means. Since the points in an optimal set are the conditional expectations in their own Voronoi regions, we have $1\leq a_1<a_2<a_3<\infty$. Suppose that the Voronoi region of $a_1$ contains the set $\set{1, 2, 3}$.
Then,
\[V_3\geq \sum_{j=1}^3 f(j)(j-Av[1,3])^2=Er[1,3] =\frac{5}{14}>\frac 13\geq V_3,\]
which leads to a contradiction. Hence, we can assume that the Voronoi region of $a_1$ contains only the set $\set{1}$, or the set $\set{1,2}$. For the sake of contradiction, assume that the Voronoi region of $a_1$ contains only the set $\set{1}$.
Then, the Voronoi region of $a_2$ must contain the element $2$. Suppose that the Voronoi region of $a_2$ contains the set $\set{2, 3, 4, 5}$. Then,
\[V_3\geq  Er[2,5] =\frac{181}{368}=0.491848>V_3,\]
which yields a contradiction. Assume that the Voronoi region of $a_2$ contains only the set $\set{2,3,4}$, and so the Voronoi region of $a_3$ contains the set $\set{n : n\geq 5}$. Then, the distortion error is
\[V_3=Er[2,4]+Er[5, \infty)=\frac{9}{22} =0.409091>V_3,\]
which gives a contradiction.
Next, assume that the Voronoi region of $a_2$ contains only the set $\set{2,3}$, and so the Voronoi region of $a_3$ contains the set $\set{n : n\geq 4}$. Then, the distortion error is
\[V_3=Er[2,3]+Er[4, \infty)=\frac{7}{20}>V_3,\]
which leads to a contradiction. Finally, assume that the Voronoi region of $a_2$ contains only the set $\set{2}$, and so the Voronoi region of $a_3$ contains the set $\set{n : n\geq 3}$. Then, the distortion error is
\[V_3=Er[3, \infty)=\frac{1}{2}>V_3,\]
which gives a contradiction.
Thus, we can conclude that the Voronoi region of $a_1$ contains only the set $\set{1,2}$.
Then, the Voronoi region of $a_2$ must contain the element $3$. Suppose that the Voronoi region of $a_2$ contains the set $\set{3, 4, 5, 6}$. Then,
\[V_3\geq  \sum_{j=1}^2f(j)(j-Av[1,2])^2+ \sum_{j=3}^6f(j)(j-Av[3, 6])^2=Er[1,2]+Er[3,6] =\frac{59}{160} =0.36875>V_3,\]
which yields a contradiction. Assume that the Voronoi region of $a_2$ contains only the set $\set{3, 4, 5}$, and so the Voronoi region of $a_3$ contains the set $\set{n : n\geq 6}$. Then, the distortion error is
\[V_3=Er[1,2]+Er[3,5]+Er[6, \infty)=\frac{29}{84} =0.345238>V_3,\]
which gives a contradiction.
Next, assume that the Voronoi region of $a_2$ contains only the element $3$, and so the Voronoi region of $a_3$ contains the set $\set{n : n\geq 4}$. Then, the distortion error is
\[V_3=Er[1,2]+Er[4, \infty)=\frac{5}{12}=0.416667>V_3,\]
which yields a contradiction.
Hence, we can conclude that the Voronoi region of $a_2$ contains only the set $\set{3,4}$ yielding $a_1=Av[1,2]$, $a_2=Av[3,4]$, and $a_3=Av[5, \infty)$ with quantization error $V_3=\frac 13$.
Thus, the proof of the proposition is complete.
\end{proof}

\begin{prop1} \label{prop1121}
The sets $\set{1, 2,  Av[3,4], Av[5, \infty)}$  forms the optimal sets of four-means with quantization error $V_4=\frac{1}{6}$.
\end{prop1}
\begin{proof}
The distortion error due to set $\gb:=\set{1, 2, Av[3,4], Av[5, \infty)}$ is given by
\[V(P; \gb)=Er[3,4]+Er[5, \infty)=\frac 16.\]
Since $V_4$ is the quantization error for four-means, we have $V_4\leq \frac 16$. Let $\ga:=\set{a_1, a_2, a_3, a_4}$ be an optimal set of four-means. Since the points in an optimal set are the conditional expectations in their own Voronoi regions, we have $1\leq a_1<a_2<a_3<a_4<\infty$.
Clearly, the Voronoi region of $a_1$ contains the point $1$. Suppose that the Voronoi region of $a_1$ contains the set $\set{1, 2, 3}$.
Then,
\[V_3\geq \sum_{j=1}^3 f(j)(j-Av[1,3])^2=Er[1,3] =\frac{5}{14}>\frac 16\geq V_4,\]
which leads to a contradiction. Hence, we can assume that the Voronoi region of $a_1$ contains only the set $\set{1}$, or the set $\set{1,2}$. Suppose that the Voronoi region of $a_1$ contains only the set $\set{1,2}$. Then, the remaining elements of the set of natural numbers are contained in the union of the Voronoi regions of $a_2, a_3$ and $a_4$. Notice that the total distortion error contributed by the points $a_2, a_3$ and $a_4$ are positive. Hence,
\[V_4>\te{distortion error contributed by the point } a_1=Er[1, 2]=\frac 16=V_4,\]
which leads to a contradiction. Hence, the Voronoi region of $a_1$ cannot contain $\set{1, 2}$, i.e., the Voronoi region of $a_1$ contains only set $\set{1}$, i.e., $a_1=1$. Then, the Voronoi region of $a_2$ must contain $2$. Suppose that the Voronoi region of $a_2$ contains the set $\set{2,3,4}$. Then,
\[V_4\geq Er[2,4]=\frac{25}{88}>V_4,\]
which leads to a contradiction. Hence, we can assume that the Voronoi region of $a_2$ contains only the set $\set{2}$, or the set $\set{2,3}$. Suppose that the Voronoi region of $a_2$ contains only the set $\set{2,3}$. Assume that the Voronoi region of $a_3$ contains the set $\set{4,5,6,7}$. Then,
\[V_4\geq Er[2,3]+Er[4,7]=\frac{193}{960}>V_4,\]
which leads to a contradiction. Hence, we can assume that the Voronoi region of $a_3$ contains only the set $\set{4}$, $\set{4,5}$, or $\set{4,5,6}$. Suppose that the Voronoi region of $a_3$ contains only the set $\set{4,5,6}$. Then, the Voronoi region of $a_4$ contains the set $\set{n : n\geq 7}$. Then,
\[V_4=Er[2,3]+Er[4,6]+Er[7, \infty)=\frac{53}{280}>V_4,\]
which is a contradiction. Suppose that the Voronoi region of $a_3$ contains only the set $\set{4,5}$. Then, the Voronoi region of $a_4$ contains the set $\set{n : n\geq 6}$. Then,
\[V_4=Er[2,3]+Er[4,5]+Er[6, \infty)=\frac{11}{60}>V_4,\]
which leads to a contradiction.  Suppose that the Voronoi region of $a_3$ contains only the set $\set{4}$. Then, the Voronoi region of $a_4$ contains the set $\set{n : n\geq 5}$. Then,
\[V_4=Er[2,3]+Er[5, \infty)=\frac{9}{40}>V_4,\]
which leads to a contradiction.
Thus, we see that if the Voronoi region of $a_2$ contains only the set $\set{2,3}$, then a contradiction arises. Hence, we can conclude that the Voronoi region of $a_2$ contains only the set $\set{2}$, in other words, we have $a_2=2$. Then, the Voronoi region of $a_3$ contains the set $\set{3}$. Suppose that the Voronoi region of $a_3$ contains the set $\set{3,4,5, 6}$, then as before we see a contradiction arises. Hence, the Voronoi region of $a_3$ contains only the set $\set{3}$, $\set{3, 4}$, or the set $\set{3, 4,5}$. Notice that if the Voronoi region of $a_3$ contains only the set $\set{3, 4, 5}$, then the Voronoi region of $a_4$ contains the set $\set{n: n\geq 6}$, and if the Voronoi region of $a_3$ contains only the set $\set{3}$, then the Voronoi region of $a_4$ contains the set $\set{n: n\geq 4}$. In either of the cases, proceeding as before, we see that a contradiction arises. Hence, we can conclude that the Voronoi region of $a_3$ contains only the set $\set{3, 4}$. Hence, the Voronoi region of $a_4$ contains $\set{n : n\geq 5}$. Thus, we have
\[a_1=1, a_2=2, a_3=[3, 4], \te{ and } a_4=[5, \infty) \te{ with } V_4=\frac 16.\]
Thus, the proof of the proposition is complete.
\end{proof}

 \begin{prop1}
The sets  $\set{1, 2, \cdots, n-3, Av[n-2, n-1], Av[n, n+1], Av[n+2, \infty)}$ and $\set{1, 2, \cdots, n-3, n-2, Av[n-1, n], Av[n+1, \infty)}$ form the optimal sets of $n$-means for all $n\geq 5$ with the quantization error
$V_n=\frac{2^{3-n}}{3}.$
\end{prop1}
 \begin{proof}
The proof follows by Theorem~\ref{main} and Conjecture~\ref{conj1} under the assumption that Conjecture~\ref{conj1} is true.
 \end{proof}

\subsection{Optimal quantization for $P$ when $(p_1, p_2, p_3, p_4, \cdots)=(\frac 1 {2^3}, \frac 1 {2^2},  \frac 1{2}, \frac 1{2^4}, \frac 1{2^5}, \cdots)$}
Let us give the results in the following propositions.

\begin{prop1} \label{prop243}
The optimal set of two-means is given by $\set{Av[1,3], Av[4, \infty)}$ with quantization error $V_2=\frac{5}{7}$.
\end{prop1}

\begin{proof}
 We see that $Av[1,3]=\frac{17}{7}$, and $Av[4, \infty)=5$.
Since $3<\frac 12(\frac{17}{7}+5)=\frac{26}{7}<4$, the distortion error due to the set $\gb:=\set{\frac{17}{7}, 5}$ is given by
\[V(P; \gb)=Er[1,3]+Er[4, \infty)=\frac{5}{7}.\]
Since $V_2$ is the quantization error for two-means, we have $V_2\leq \frac{5}{7}$. Let $\ga:=\set{a_1, a_2}$ be an optimal set of two-means such that $a_1<a_2$. Since the points in an optimal set are the conditional expectations in their own Voronoi regions, we have $1\leq a_1<a_2<\infty$. Notice that the Voronoi region of $a_1$ must contain $1$. Suppose that the Voronoi region of $a_1$ contains the set $\set{1, 2, 3, 4,5}$. Then,
\[V_2\geq Er[1,5]=\frac{393}{496}>V_2,\]
which yields a contradiction. Thus, we can conclude that the Voronoi region of $a_1$ does not contain the point $5$.
Suppose that the Voronoi region of $a_1$ contains only the $\set{1, 2, 3, 4}$, and so the Voronoi region of $a_2$ contains the set $\set{n : n\geq 5}$. Then, we have
\[V_2=Er[1,4]+Er[5, \infty)=\frac{11}{15}>V_2,\]
which leads to a contradiction. Similarly, we can show that if the Voronoi region of $a_1$ contains only the set $\set{1}$, or the set $\set{1, 2}$, then we get a contradiction. Hence, we can assume that the Voronoi region of $a_1$ contains only the set $\set{1, 2, 3}$, and so the Voronoi region of $a_2$ contains only the set $\set{n : n\geq 4}$. Thus, we have
\[a_1=Av[1,3], \ a_2=Av[4, \infty) \te{ with } V_2=\frac{5}{7}.\]
Thus, the proof of the proposition is complete.
\end{proof}
\begin{prop1} \label{prop244}
The sets $\set{Av[1,2], Av[3,4], Av[5, \infty)}$  forms the optimal sets of three-means with quantization error $V_3=\frac{19}{72}$.
\end{prop1}

\begin{proof}
The distortion error due to set $\gb:=\set{Av[1,2], Av[3,4], Av[5, \infty)}$ is given by
\[V(P; \gb)=Er[1,2]+Er[3, 4]+Er[5, \infty)=\frac{19}{72}.\]
Since $V_3$ is the quantization error for three-means, we have $V_3\leq \frac{19}{72}=0.263889$. Let $\ga:=\set{a_1, a_2, a_3}$ be an optimal set of three-means. Since the points in an optimal set are the conditional expectations in their own Voronoi regions, we have $1\leq a_1<a_2<a_3<\infty$. Suppose that the Voronoi region of $a_1$ contains the set $\set{1, 2, 3}$.
Then,
\[V_3\geq Er[1,3] =\frac{13}{28}>\frac{19}{72}\geq V_3,\]
which leads to a contradiction. Hence, we can assume that the Voronoi region of $a_1$ contains only the set $\set{1}$, or the set $\set{1,2}$. Suppose that the Voronoi region of $a_1$ contains only the set $\set{1}$.
In this case, the Voronoi region of $a_2$ must contain the element $2$. Suppose that the Voronoi region of $a_2$ contains the set $\set{2, 3, 4}$.
Then,
\[V_3\geq  Er[2,4] =\frac{7}{26}=0.269231>V_3,\]
which yields a contradiction. Assume that the Voronoi region of $a_2$ contains only the set $\set{2,3}$, and so the Voronoi region of $a_3$ contains the set $\set{n : n\geq 4}$. Then, the distortion error is
\[V_3=Er[2,3]+Er[4, \infty)=\frac{5}{12}>V_3,\]
which leads to a contradiction. Finally, assume that the Voronoi region of $a_2$ contains only the set $\set{2}$, and so the Voronoi region of $a_3$ contains the set $\set{n : n\geq 3}$. Then, the distortion error is
\[V_3=Er[3, \infty)=\frac{13}{20}>V_3,\]
which leads to a contradiction.
Hence, we can assume that the Voronoi region of $a_1$ contains only the set $\set{1,2}$. Then, the Voronoi region of $a_2$ must contain the set $\set{3}$.
Suppose that the Voronoi region of $a_2$ contains the set $\set{3, 4, 5, 6}$. Then,
\[V_3\geq  Er[1,2]+Er[3,6] =\frac{151}{416}>V_3,\]
which yields a contradiction. Assume that the Voronoi region of $a_2$ contains only the set $\set{3, 4, 5}$, and so the Voronoi region of $a_3$ contains the set $\set{n : n\geq 6}$. Then, the distortion error is
\[V_3=Er[1,2]+Er[3,5]+Er[6, \infty)=\frac{35}{114}>V_3,\]
which gives a contradiction.
Next, assume that the Voronoi region of $a_2$ contains only the element $3$, and so the Voronoi region of $a_3$ contains the set $\set{n : n\geq 4}$. Then, the distortion error is
\[V_3=Er[1,2]+Er[4, \infty)=\frac 13>V_3,\]
which yields a contradiction.
Hence, we can conclude that the Voronoi region of $a_2$ contains only the set $\set{3,4}$ yielding $a_1=Av[1,2]$, $a_2=Av[3,4]$, and $a_3=Av[5, \infty)$ with quantization error $V_3=\frac{19}{72}$.
\end{proof}

\begin{prop1} \label{prop245}
The sets $\set{Av[1,2], 3, Av[4,5], Av[6, \infty)}$  forms the optimal sets of four-means with quantization error $V_4=\frac{1}{6}$.
\end{prop1}

\begin{proof}
The distortion error due to set $\gb:=\set{Av[1,2], 3, Av[4,5], Av[6, \infty)}$ is given by
\[V(P; \gb)=Er[1,2]+Er[4, 5]+Er[6, \infty)=\frac 16.\]
Since $V_4$ is the quantization error for four-means, we have $V_4\leq \frac 16=0.166667$.
Let $\ga:=\set{a_1, a_2, a_3, a_4}$ be an optimal set of four-means. Since the points in an optimal set are the conditional expectations in their own Voronoi regions, we have $1\leq a_1<a_2<a_3<a_4<\infty$. Suppose that the Voronoi region of $a_1$ contains the set $\set{1, 2, 3}$. Then,
\[V_4\geq Er[1,3] =\frac{13}{28}> V_4,\]
which leads to a contradiction. Hence, we can assume that the Voronoi region of $a_1$ contains only the set $\set{1}$, or the set $\set{1,2}$. Suppose that the Voronoi region of $a_1$ contains only the set $\set{1}$.
In this case, the Voronoi region of $a_2$ must contain the element $2$. Suppose that the Voronoi region of $a_2$ contains the set $\set{2, 3, 4}$.
Then,
\[V_4\geq  Er[2,4] =\frac{7}{26}=0.269231>V_4,\]
which yields a contradiction. Assume that the Voronoi region of $a_2$ contains only the set $\set{2,3}$. Then, notice that the Voronoi regions of $a_3$ and $a_4$ contain all the elements $\set{n : n\geq 4}$. Thus, the total distortion error contributed by $a_3$ and $a_4$ must be positive. This leads to the fact that
\[V_4>Er[2,3]=\frac 16\geq V_4,\]
which gives a contradiction. Assume that the Voronoi region of $a_2$ contains only the set $\set{2}$. Then, as before, we see that a contradiction arises.
Hence, we can assume that the Voronoi region of $a_1$ contains only the set $\set{1, 2}$. Then, the Voronoi region of $a_2$ must contain $3$. If the Voronoi region of $a_2$ contains more points using the similar arguments as before, we can show that a contradiction arises. Hence, we can conclude that $a_2=3$. Again, using the similar arguments, we can show that the Voronoi region of $a_3$ contains only the set $\set{4,5}$, and the Voronoi region of $a_4$ contains only the set $\set{n : n\geq 6}$. Thus, we have
\[a_1=Av[1,2], a_2=3, a_3=Av[4,5], \te{ and } a_4=Av[6, \infty) \te{ with quantization error } V_4=\frac 16.\]
Thus, the proof of the proposition is complete.
\end{proof}

\begin{prop1} \label{prop246}
The sets $\set{1, 2, 3, Av[4,5], Av[6, \infty)}$  forms the optimal sets of five-means with quantization error $V_5=\frac{1}{12}$.
\end{prop1}
\begin{proof}
The distortion error due to set $\gb:=\set{1, 2, 3, Av[4,5], Av[6, \infty)}$ is given by
\[V(P; \gb)=Er[4, 5]+Er[6, \infty)=\frac{1}{12}.\]
Since $V_5$ is the quantization error for five-means, we have $V_5\leq \frac{1}{12}=0.0833333$.
Let $\ga:=\set{a_1, a_2, a_3, a_4, a_5}$ be an optimal set of five-means such that $a_1<a_2<a_3<a_4<a_5$. Since the points in an optimal set are the conditional expectations in their own Voronoi regions, we have $1\leq a_1<a_2<a_3<a_4<a_5<\infty$. Clearly, the Voronoi region of $a_1$ contains the point $1$. For the sake of contradiction, assume that the Voronoi region of $a_1$ contains the set $\set{1,2,3}$. Then,
\[V_5\geq Er[1,3]=\frac{13}{28}>V_5,\]
which is a contradiction. Next, assume that the Voronoi region of $a_1$ contains only the set $\set{1, 2}$. Then, notice that the union Voronoi regions of $a_2$,  $a_3$, $a_4$, and $a_5$ contain all the elements $\set{n : n\geq 3}$. Hence, we must have
\[V_5>Er[1,2]=\frac{1}{12}\geq V_5,\]
which is a contradiction. Hence, we can conclude that the Voronoi region of $a_1$ contains only the element $1$, i.e., $a_1=1$. Clearly, the Voronoi region of $a_2$ contains the element $2$. Suppose the Voronoi region of $a_2$ contains the set $\set{2,3}$. Then, we have
\[V_5\geq Er[2,3]=\frac 16>V_5,\]
which give a contradiction. Hence, the Voronoi region of $a_2$ contains only the element $2$, i.e., $a_2=2$. Similarly, we can show that $a_3=3$. The rest of the proof follows in the similar lines as given in Proposition~\ref{prop1121}. Thus, we see that $a_4=Av[4,5]$ and $a_5=Av[6, \infty)$ with quantization error $V_5=\frac{1}{12}$. Thus, the proof of the proposition is complete.
\end{proof}

 \begin{prop1}
The sets  $\set{1, 2, \cdots, n-3, Av[n-2, n-1], Av[n, n+1], Av[n+2, \infty)}$ and $\set{1, 2, \cdots, n-3, n-2, Av[n-1, n], Av[n+1, \infty)}$ form the optimal sets of $n$-means for all $n\geq 6$ with the quantization error
$V_n=\frac{2^{3-n}}{3}.$
\end{prop1}
 \begin{proof}
The proof follows by Theorem~\ref{main} and Conjecture~\ref{conj1} under the assumption that Conjecture~\ref{conj1} is true.
 \end{proof}

\section{Observation and Remarks}
In Conjecture~\ref{conj1} the probability measure $P$ is defined as  $P:=\sum_{j=1}^{k-1} p_j\gd_j+\sum_{j=k}^\infty \frac 1{2^j}\gd_j$, where $(p_1, p_2, \cdots, p_{k-1})$ is a permutation of the set $\set{\frac 12, \frac 1{2^2}, \cdots, \frac 1{2^{k-1}}}$, where $k\in \D N$ with $k\geq 2$. If $P:=\sum_{j=1}^{k-1} p_j\gd_j+\sum_{j=k}^\infty \frac 1{2^j}\gd_j$, and $(p_1, p_2, \cdots, p_{k-1})$ is not a permutation of the set $\set{\frac 12, \frac 1{2^2}, \cdots, \frac 1{2^{k-1}}}$, then Conjecture~\ref{conj1} is not true. In this regard, we give the following proposition.

\begin{prop} \label{prop51}
For the probability measure $P$ given by $P:=\frac {149}{200} \gd_1+ \frac 1{200}\gd_2+\sum_{j=3}^\infty \frac 1{2^j} \gd_j$ the optimal set of five-means is given by
\[\set{1, Av[2,3], 4, Av[5,6], Av[7, \infty)}, \te{ or } \set{1, Av[2,3], Av[4,5], Av[6,7], Av[8, \infty)}\] with quantization error $V_5=\frac{29}{624}$.
\end{prop}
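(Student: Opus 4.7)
The plan is to adapt the template of Propositions~\ref{prop43}--\ref{prop246}: first certify the upper bound $V_5 \leq \tfrac{29}{624}$ by computing the distortion of the two candidate sets, and then eliminate every other configuration through an exhaustive Voronoi-region case analysis on $a_1 < a_2 < a_3 < a_4 < a_5$. For the upper bound, the small mass $p_2 = \tfrac{1}{200}$ yields the non-standard value $Av[2,3] = \tfrac{77}{26}$ with $Er[2,3] = \tfrac{1}{208}$; for $k \geq 3$ the usual geometric identities $Er[k,k+1] = \tfrac{1}{3 \cdot 2^k}$ and $Er[k,\infty) = \tfrac{1}{2^{k-2}}$ continue to hold, and summing gives $\tfrac{1}{208} + \tfrac{1}{24} = \tfrac{29}{624}$ for both candidates.

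Next I would analyze the Voronoi region of $a_1$, which must contain $1$. If that region also contains $3$ (equivalently, equals $\{1,2,3\}$ or larger), then $V_5 \geq Er[1,3]$, and a direct computation gives $Er[1,3] \gg V_5$, a contradiction. Hence the region of $a_1$ is either $\{1\}$ (so $a_1 = 1$) or $\{1,2\}$ (so $a_1 = Av[1,2] = \tfrac{151}{150}$, contributing $Er[1,2] = \tfrac{149}{30000}$).

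The decisive step is ruling out $a_1 = Av[1,2]$. In that case $\{a_2, a_3, a_4, a_5\}$ must be an optimal 4-means set for the tail sub-distribution $\sum_{j=3}^\infty \tfrac{1}{2^j}\delta_j$. Applying a Theorem~\ref{main}-style argument to this tail (equivalently, using the shift $j \mapsto j-2$ to reduce to the pure geometric distribution scaled by $\tfrac{1}{4}$) shows that the two optimal tail 4-means sets are $\{3, 4, Av[5,6], Av[7,\infty)\}$ and $\{3, Av[4,5], Av[6,7], Av[8,\infty)\}$, each with distortion exactly $\tfrac{1}{24}$. Thus $V_5 \geq \tfrac{149}{30000} + \tfrac{1}{24}$, and the inequality $\tfrac{149}{30000} > \tfrac{1}{208}$ (equivalently $149 \cdot 208 = 30992 > 30000$) yields $V_5 > \tfrac{29}{624}$, contradicting the upper bound.

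Finally, with $a_1 = 1$ established, the Voronoi region of $a_2$ must contain $2$; the case $\{2,3,4\}$ is eliminated by $Er[2,4] > \tfrac{29}{624}$, and the case $a_2 = 2$ is eliminated because the ensuing 3-means tail problem on $\{3,4,5,\dots\}$ has minimum distortion $\tfrac{1}{12} > \tfrac{29}{624}$. This forces the Voronoi region of $a_2$ to be $\{2,3\}$, so $a_2 = Av[2,3]$. A last Theorem~\ref{main}-style analysis of the remaining 3-means subproblem on $\{4,5,6,\dots\}$ then produces exactly the two claimed configurations, each contributing additional distortion $\tfrac{1}{24}$, giving $V_5 = \tfrac{29}{624}$. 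The main obstacle is the tightness in the $a_1 = Av[1,2]$ case: because $Er[1,2]$ exceeds $Er[2,3]$ by only about $1.6 \times 10^{-4}$, no crude estimate suffices, and the argument depends on knowing the optimal tail 4-means distortion exactly.
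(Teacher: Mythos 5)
Your proposal is correct and follows essentially the same route as the paper: certify the upper bound $V_5\le \frac{29}{624}$ from the candidate sets (your explicit values $Av[2,3]=\frac{77}{26}$, $Er[2,3]=\frac1{208}$, $Er[1,2]=\frac{149}{30000}$ all check out), then run the interval-wise Voronoi case analysis on $a_1$ and $a_2$ and finish the tail with a Theorem~\ref{main}-style argument, the whole thing hinging on the same razor-thin inequality $\frac{149}{30000}+\frac1{24}=\frac{1399}{30000}>\frac{29}{624}$, i.e.\ $149\cdot 208>30000$, that the paper uses. The one local difference is in the branch where the Voronoi region of $a_1$ is $\set{1,2}$: the paper splits further on the region of $a_2$ (eliminating $\set{3,4}$ via $Er[1,2]+Er[3,4]=\frac{1399}{30000}$, then forcing $a_2=3$ and invoking the three-point tail analysis), whereas you collapse this into a single appeal to the \emph{exact} optimal four-point distortion $\frac1{24}$ for the tail $\sum_{j\ge 3}2^{-j}\gd_j$. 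That claim is true, but note it does not follow from Theorem~\ref{main} as stated, since the theorem is conditional on the leading quantizers being $1,2,\dots,n-3$ (and Conjecture~\ref{conj1} is not verified for the plain geometric case in the paper); to make it self-contained you need the same one-step elimination the paper performs explicitly, namely that the smallest of the four tail points has Voronoi region $\set{3}$ alone, since a region containing $\set{3,4}$ already costs $Er[3,4]=\frac1{24}$ plus a strictly positive remainder. With that sentence added, your packaging is marginally cleaner (one lower bound instead of two sub-cases), at the cost of quoting an unconditional tail optimum; the paper's version avoids that by staying with conditional, case-by-case estimates. You also correctly identify why no crude estimate works here: $Er[1,2]-Er[2,3]=\frac{149}{30000}-\frac{1}{208}$ is only about $1.6\times10^{-4}$, which is exactly the tightness visible in the paper's comparison $0.0466333>0.0464744$.
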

\begin{proof}
The distortion error due to set $\gb:=\set{1, Av[2,3], 4, Av[5,6], Av[7, \infty)}$ is given by
\[V(P; \gb)=Er[2,3]+Er[5,6]+Er[7, \infty)=\frac{29}{624}.\]
Since $V_5$ is the quantization error for five-means, we have $V_5\leq \frac{29}{624}=0.0464744$.
Let us assume that  $\ga:=\set{a_1, a_2, a_3, a_4, a_5}$ is an optimal set of five-means such that $a_1<a_2<a_3<a_4<a_5$. Since the points in an optimal set are the conditional expectations in their own Voronoi regions, we have $1\leq a_1<a_2<a_3<a_4<a_5<\infty$. Clearly, the Voronoi region of $a_1$ contains the point $1$. For the sake of contradiction, assume that the Voronoi region of $a_1$ contains the set $\set{1,2,3}$. Then,
\[V_5\geq Er[1,3]=\frac{7537}{17500}=0.430686>V_5,\]
which is a contradiction. Hence, we can assume that the Voronoi region of $a_1$ contains only the set $\set{1}$ or the set $\set{1,2}$. Suppose that the Voronoi region of $a_1$ contains only the set $\set{1,2}$. Then, the Voronoi region of $a_2$ must contain the element $3$. Suppose that the Voronoi region of $a_2$ contains the set $\set{3,4}$. Then,
\[V_5\geq Er[1,2]+Er[3,4]=\frac{1399}{30000}=0.0466333>V_5,\]
which leads to a contradiction. Hence, we can assume that the Voronoi region of $a_2$ contain only the element $3$, i.e., $a_2=3$. Then, the union of the Voronoi regions of $a_3, a_4, a_5$ contains the set $\set{4,5,6, \cdots}$ with associated probability $\frac 1{2^j}$ for each $j\in \set{4,5,6, \cdots}$. Hence, using the similar lines as described in the proof of Theorem~\ref{main}, we can show that
\begin{align} \label{exp1} \set{a_3, a_4, a_5} \te{ equals the set } \set{4, Av[5,6], Av[7, \infty)}, \te{ or } \set{Av[4,5], Av[6,7], Av[8, \infty)}
\end{align} with the quantization error
\[V_5=Er[1,2]+Er[5,6]+Er[7, \infty)=\frac{1399}{30000}=0.0466333>V_5,\]
which leads to a contradiction. Hence, we can assume that the Voronoi region of $a_1$ contains only the element $1$, i.e., $a_1=1$. Then, the Voronoi region of $a_2$ must contain $2$. Suppose that the Voronoi region of $a_2$ contains the set $\set{2,3,4}$. Then,
\[V_5\geq Er[2,4]=\frac{31}{616}>V_5,\]
which leads to a contradiction. Hence, the Voronoi region of $a_2$ contains only the set $\set{2}$, or $\set{2,3}$. Suppose that the Voronoi region of $a_2$ contains only the set $\set{2}$, i.e., $a_2=2$.
Then, as $a_1=1$, $a_2=2$, using the similar lines as described in the proof of Theorem~\ref{main}, we can show that $a_3=3, a_4=Av[4,5], a_5=Av[6, \infty)$; or $a_3=Av[3,4], a_4=Av[5,6], a_5=Av[7, \infty)$ with quantization error $V_5=\frac{1}{12}>V_5$, which is a contradiction. Hence, we can assume that the Voronoi region of $a_2$ contains only the set $\set{2,3}$.  Again, using the similar lines as described in the proof of Theorem~\ref{main}, we can show that $\set{a_3, a_4, a_5}$ equals the set $\set{4, Av[5,6], Av[7, \infty)}$; or $\set{Av[4,5], Av[6,7], Av[8, \infty)}$. Thus, we conclude that the optimal set of five-means is
either $\set{1, Av[2,3], 4, Av[5,6], Av[7, \infty)}$ or $\set{1, Av[2,3], Av[4,5], Av[6,7], Av[8, \infty)}$ with quantization error $V_5=\frac{29}{624}$.
This completes the proof of the proposition.
\end{proof}

\begin{remark}
Proposition~\ref{prop51} implies that Conjecture~\ref{conj1} is not true for an arbitrary probability distribution $(p_1, p_2, p_3, \cdots)$ associated with the set of positive integers $\D N$.
\end{remark}

\begin{remark}
 Conjecture~\ref{conj1} is verified by two examples given in Subsection~\ref{subsection111} and Subsection~\ref{subsection112}. We still could not give a general proof of the conjecture. It will be worthwhile to investigate the general proof of the conjecture.
 \end{remark}


\begin{thebibliography}{9999}
\bibitem[AW]{AW} E.F. Abaya and G.L. Wise, \emph{Some remarks on the existence of optimal quantizers}, Statistics \& Probability Letters, Volume 2, Issue 6, December 1984, Pages 349-351.



    \bibitem[DFG]{DFG} Q. Du, V. Faber and M. Gunzburger, \emph{Centroidal Voronoi Tessellations: Applications and Algorithms}, SIAM Review, Vol. 41, No. 4 (1999), pp. 637-676.

\bibitem[DR]{DR} C.P. Dettmann and M.K. Roychowdhury, \emph{An algorithm to compute CVTs for finitely generated Cantor distributions}, Southeast Asian Bulletin of Mathematics (2021) 45: 173-188.

\bibitem[GG]{GG} A. Gersho and R.M. Gray, \emph{Vector quantization and signal compression}, Kluwer Academy publishers: Boston, 1992.

\bibitem[GKL]{GKL}  R.M. Gray, J.C. Kieffer and Y. Linde, \emph{Locally optimal block quantizer design}, Information and Control, 45 (1980), pp. 178-198.


\bibitem[GL1]{GL1} A. Gy\"orgy and T. Linder, \emph{On the structure of optimal entropy-constrained scalar quantizers},  IEEE transactions on information theory, vol. 48, no. 2, February 2002.

\bibitem[GL2]{GL2} S. Graf and H. Luschgy, \emph{Foundations of quantization for probability distributions}, Lecture Notes in Mathematics 1730, Springer, Berlin, 2000.



  \bibitem[GN]{GN} R.M. Gray and D.L. Neuhoff, \emph{Quantization}, IEEE Transactions on Information Theory, Vol. 44, No. 6, October 1998, 2325-2383.




\bibitem[P]{P} D. Pollard, \emph{Quantization and the Method of $k$-Means}, IEEE Transactions on Information Theory, 28 (1982), 199-205.


\bibitem[R1]{R1} M.K. Roychowdhury, \emph{Quantization and centroidal Voronoi tessellations for probability measures on dyadic Cantor sets}, Journal of Fractal Geometry, 4 (2017), 127-146.
%
%

\bibitem[Z1]{Z1} P.L. Zador, \emph{Asymptotic Quantization Error of Continuous Signals and the Quantization Dimension}, IEEE Transactions on Information Theory, 28 (1982), 139-149.

\bibitem[Z2]{Z2} R. Zam, \emph{Lattice Coding for Signals and Networks: A Structured Coding Approach to Quantization, Modulation, and Multiuser Information Theory}, Cambridge University Press, 2014.

\end{thebibliography}
\end{document}